\numberwithin{equation}{section}
\theoremstyle{plain}
\newtheorem{theorem}{Theorem}[section]
\newtheorem{lemma}[theorem]{Lemma}
\newtheorem{conjecture}[theorem]{Conjecture}
\theoremstyle{definition}
\newtheorem{?}[theorem]{Problem}
\def\boxit#1{\leavevmode\hbox{\vrule\vtop{\vbox{\kern.33333pt\hrule
    \kern1pt\hbox{\kern1pt\vbox{#1}\kern1pt}}\kern1pt\hrule}\vrule}}
\newcommand{\f}[1]{\ifthenelse{\equal{#1}{1}}{(q;q)_\infty}{(q^{#1};q^{#1})_{\infty}}}
\begin{document}
\title[Vanishing coefficients in some $q$-series expansions]{Vanishing coefficients in some $q$-series expansions}

\author[D. Tang]{Dazhao Tang}

\address[Dazhao Tang]{College of Mathematics and Statistics, Chongqing University, Huxi Campus LD206, Chongqing 401331, P.R. China}
\email{dazhaotang@sina.com}

\date{\today}

\begin{abstract}
Motivated by the recent work of Hirschhorn on vanishing coefficients of the arithmetic progressions in certain $q$-series expansions, we study some variants of these $q$-series and prove some comparable results. For instance, let
\begin{align*}
(-q,-q^{4};q^{5})_{\infty}^{2}(q^{4},q^{6};q^{10})_{\infty}=\sum_{n=0}^{\infty}a_{1}(n)q^{n},
\end{align*}
then
\begin{align*}
a_{1}(5n+3)=0.
\end{align*}
\end{abstract}

\subjclass[2010]{11F33, 30B10}

\keywords{Vanishing coefficients; $q$-series expansions; Jacobi's triple product identity}

\maketitle

%%%%%%%%%%%%%%%%%%%
\section{Introduction}
The study of vanishing coefficients in infinite product expansions, which was derived from the Hardy-Ramanujan-Rademacher expansions for quotients of certain infinite products, can be traced back to Richmond and Szekeres \cite{RS1978}. Soon after, Andrews and Bressoud \cite{AB1979} proved a general theorem for certain Rogers-Ramanujan type infinite products, which contains the results of Richmond and Szekeres as special cases. Later, Alladi and Gordon \cite{AG1994} arrived at a more general result with some restrictions. In a recent paper, among other things,  McLaughlin \cite{Lau2015} further generalized the results of Alladi and Gordon.

Quite recently, Hirschhorn \cite{Hir2018} studied the following two $q$-series:
\begin{align}
(-q,-q^{4};q^{5})_{\infty}(q,q^{9};q^{10})_{\infty}^{3} &=\sum_{n=0}^{\infty}a(n)q^{n},\label{H1}\\
(-q^{2},-q^{3};q^{5})_{\infty}(q^{3},q^{7};q^{10})_{\infty}^{3} &=\sum_{n=0}^{\infty}b(n)q^{n}.\label{H2}
\end{align}
He proved that
\begin{align*}
a(5n+2) &=a(5n+4)=0,\\
b(5n+1) &=b(5n+4)=0.
\end{align*}

Here and in what follows, we adopt the following customary $q$-series notations:
\begin{align*}
(a;q)_{\infty} &:=\prod_{n=0}^{\infty}(1-aq^{n}),\\
(a_{1},a_{2},\cdots,a_{m};q)_{\infty} &:=(a_{1};q)_{\infty}(a_{2};q)_{\infty}\cdots(a_{m};q)_{\infty},\quad\textrm{for}\quad|q|<1.
\end{align*}

In this paper, we consider some variants of \eqref{H1} and \eqref{H2}, and obtain some comparable results on vanishing coefficients in these $q$-series expansions.

Define
\begin{align*}
(-q,-q^{4};q^{5})_{\infty}^{2}(q^{4},q^{6};q^{10})_{\infty} &=\sum_{n=0}^{\infty}a_{1}(n)q^{n},\\
(-q^{2},-q^{3};q^{5})_{\infty}^{2}(q^{2},q^{8};q^{10})_{\infty} &=\sum_{n=0}^{\infty}b_{1}(n)q^{n},\\
(-q,-q^{4};q^{5})_{\infty}^{3}(q^{2},q^{8};q^{10})_{\infty} &=\sum_{n=0}^{\infty}a_{2}(n)q^{n},\\
(-q^{2},-q^{3};q^{5})_{\infty}^{3}(q^{4},q^{6};q^{10})_{\infty} &=\sum_{n=0}^{\infty}b_{2}(n)q^{n},\\
(-q,-q^{4};q^{5})_{\infty}^{3}(q^{3},q^{7};q^{10})_{\infty} &=\sum_{n=0}^{\infty}a_{3}(n)q^{n},\\
(-q^{2},-q^{3};q^{5})_{\infty}^{3}(q,q^{9};q^{10})_{\infty} &=\sum_{n=0}^{\infty}b_{3}(n)q^{n}.
\end{align*}

\begin{theorem}
For any integer $n\geq0$,
\begin{align}
a_{1}(5n+3) &=0,\label{a1}\\
b_{1}(5n+1) &=0,\label{b1}\\
a_{2}(5n+4) &=0,\label{a2}\\
b_{2}(5n+1) &=0,\label{b2}\\
a_{3}(5n+3) &=a_{3}(5n+4)=0,\label{a3}\\
b_{3}(5n+3) &=b_{3}(5n+4)=0.\label{b3}
\end{align}
\end{theorem}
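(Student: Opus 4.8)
The plan is to treat each of the six product identities by the same dissection strategy, so I will describe the method for $a_1(n)$ and indicate how the others follow. First I would convert every factor into a theta-function form suitable for the Jacobi triple product identity. Writing the sum over the lattice, the quantity $(-q,-q^4;q^5)_\infty^2(q^4,q^6;q^{10})_\infty$ becomes a product of theta functions in the base $q^5$ (or $q^{10}$), and I would record the standard $2$-dissection/$5$-dissection of each such theta function according to the residue of its summation index modulo $5$. The key algebraic input is that each factor, once expressed via Jacobi's triple product, is a series $\sum_{j} (\pm 1)^{?} q^{f(j)}$ whose exponents $f(j)$ fall into prescribed residue classes modulo $5$; the goal is to show that when the three such series are multiplied, the total exponent can never be congruent to $3\pmod 5$.

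The central step is therefore a \emph{quadratic-residue bookkeeping}. After applying the triple product I would obtain $a_1(n)$ as a convolution, i.e. $a_1(n)$ is a sum over triples of lattice points whose weighted exponents add up to $n$. Collecting the linear parts of the three quadratic exponents modulo $5$, I expect to find an expression of the shape $5(\text{quadratic in }j,k,\ell)+c_1 j + c_2 k + c_3 \ell + c_0$ for the exponent, and the residue of the exponent modulo $5$ reduces to a single quadratic form in one combined variable. The essential observation will be that this quadratic form, say $g(m)\equiv \alpha m^2+\beta m+\gamma \pmod 5$, represents only a proper subset of residues: since a quadratic polynomial over $\mathbb{F}_5$ takes at most three distinct values (the squares $\{0,1,4\}$ shifted by a constant), one checks directly that $3$ is never attained. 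This is precisely why the vanishing occurs at $5n+3$ for $a_1$ and at the correspondingly shifted residues $5n+1$, $5n+4$, etc., for the other five series; the shift in the missing residue is controlled by the constant term $\gamma$, which in turn is fixed by the exponents appearing in the particular product.

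Concretely, I would carry out the following steps in order. First, rewrite $(-q,-q^4;q^5)_\infty$ and $(q^4,q^6;q^{10})_\infty$ (and their analogues) as theta series via Jacobi's triple product identity, so each infinite product becomes $\sum q^{Q(\cdot)}$ with an explicit quadratic $Q$. Second, multiply the resulting series and extract the general term, writing the total exponent as a single expression in the summation indices. Third, reduce that exponent modulo $5$ and show the linear-plus-quadratic part lands only in the residue classes $\{0,1,2,4\}$ (for $a_1$), so the class $3$ is empty, which forces $a_1(5n+3)=0$. Fourth, repeat the reduction for the remaining five products, each time isolating the single forbidden residue class. The main obstacle I anticipate is the \emph{squared} factor $(-q,-q^4;q^5)_\infty^2$: squaring a theta series produces a double sum whose combined exponent is no longer a single clean quadratic, so I will need an intermediate identity—most likely a product-to-sum or a known $5$-dissection of $(-q,-q^4;q^5)_\infty^2$—to repackage it as a manageable theta-type series before the modular bookkeeping goes through. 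Managing that squared factor cleanly, rather than the final residue count, is where the real work lies.
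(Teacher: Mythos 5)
Your plan hinges on showing that the residue class $3\pmod 5$ is \emph{empty} --- that after applying the Jacobi triple product, the total exponent of a generic term can never be $\equiv 3\pmod 5$. That mechanism fails for these series, and no amount of bookkeeping will rescue it. The reason is that the quadratic parts of all the relevant theta exponents vanish modulo $5$: for $f(q,q^4)=\sum_m q^{(5m^2+3m)/2}$ one has $(5m^2+3m)/2\equiv 4m\pmod 5$, and for $(q^4,q^6;q^{10})_\infty=\frac{1}{E_{10}}\sum_m(-1)^m q^{5m^2+m}$ one has $5m^2+m\equiv m\pmod 5$, while the eta factors $1/E_5^2$ and $1/E_{10}$ contribute only exponents divisible by $5$. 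Hence the exponent of a generic term of $\sum a_1(n)q^n$ is, modulo $5$, the \emph{linear} form $4m_1+4m_2+m_3$ in the three summation indices, which is surjective onto $\mathbb{Z}/5\mathbb{Z}$; for instance $m_1=m_2=0$, $m_3=-2$ contributes $q^{18}$ with $18\equiv 3\pmod 5$. Your proposed reduction to ``a single quadratic form in one combined variable'' taking at most three values over $\mathbb{F}_5$ cannot be carried out: there is no such reduction, and the class $3$ is hit by infinitely many lattice points. So $a_1(5n+3)=0$ is not a statement about the support of the series but a genuine \emph{cancellation} statement, depending essentially on the alternating sign $(-1)^m$ supplied by the triple product of $(q^4,q^6;q^{10})_\infty$; the third step of your plan asserts something false.

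The paper's proof supplies exactly the idea your proposal is missing. After dissecting $f(q,q^4)^2$ via the substitution $(m,n)\mapsto(r+s,r-s)$ --- this is the correct handling of the squared factor, which you rightly flagged, though it turns out to be the routine part --- the generating function becomes a signed combination of eight double theta sums $S_1,\dots,S_8$ grouped in pairs such as $S_1-q^4S_2$. Hirschhorn's operation (Lemma \ref{lemma1}) then shows, for each pair, that the two $5n+3$ sub-series are \emph{identical}: one solves the congruence $2m+6n\equiv 3\pmod 5$ by an explicit unimodular change of variables (e.g.\ $2m+n=5r-2$, $m-2n=-5s-1$, so $m=2r-s-1$, $n=r+2s$) and finds that $H_{5,3}(S_1)$ and $H_{5,3}(q^4S_2)$ both equal $q^{18}\sum_{r,s}q^{100r^2+100s^2+70r+50s}$, whence they cancel. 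The remaining identities \eqref{b1}--\eqref{b3} are proved by the same pairing after further dissecting the cube $f(q,q^4)^3$ into the pieces $A_i$, $B_i$, $C_i$. To repair your write-up, you would need to replace the residue-avoidance step by this change-of-variables cancellation; as written, the argument has a fatal gap at its central step.
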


\section{Proofs}
Ramanujan's theta function is defined by
\begin{align*}
f(a,b) &:=\sum_{n=-\infty}^{\infty}a^{n(n+1)/2}b^{n(n-1)/2},
\end{align*}
where $|ab|<1$. The function $f(a,b)$ enjoys the well-known Jacobi triple product identity \cite[p. 35, Entry 19]{Ber1991}:
\begin{align*}
f(a,b) &=(-a,-b,ab;ab)_{\infty}.
\end{align*}

For notational convenience, denote
\begin{align*}
E_{j} &:=(q^{j};q^{j})_{\infty}.
\end{align*}

Let $k, l$ be positive integers and $G(q)=\sum\limits_{n=0}^{\infty}g(n)q^{n}$ be a formal power series. Define an operator $H_{k,l}$ by
\begin{align*}
H_{k,l}\left(G(q)\right) &:=\sum_{n=0}^{\infty}g(kn+l)q^{kn+l}.
\end{align*}

Recall that Ramanujan's classical theta functions $\varphi(q)$ and $\psi(q)$ are given by \cite[Eqs. (1.5.4) and (1.5.5)]{Hirb2017}:
\begin{align*}
\varphi(q) &:=f(q,q)=\sum_{n=-\infty}^{\infty}q^{n^{2}}=\dfrac{E_{2}^{5}}{E_{1}^{2}E_{4}^{2}},\\
\psi(q) &:=f(q,q^{3})=\sum_{n=0}^{\infty}q^{n(n+1)/2}=\dfrac{E_{2}^{2}}{E_{1}}.
\end{align*}

\begin{lemma}\label{lemma1}
Define
\begin{align*}
S_{1} &=\sum_{m,n=-\infty}^{\infty}q^{20m^{2}+2m+20n^{2}+6n},\quad S_{2}=\sum_{m,n=-\infty}^{\infty}q^{20m^{2}+18m+20n^{2}+6n},\\
S_{3} &=\sum_{m,n=-\infty}^{\infty}q^{20m^{2}+2m+20n^{2}+14n},\quad S_{4}=\sum_{m,n=-\infty}^{\infty}q^{20m^{2}+18m+20n^{2}+14n},\\
S_{5} &=\sum_{m,n=-\infty}^{\infty}q^{20m^{2}+2m+20n^{2}+4n},\quad S_{6}=\sum_{m,n=-\infty}^{\infty}q^{20m^{2}+18m+20n^{2}+4n},\\
S_{7} &=\sum_{m,n=-\infty}^{\infty}q^{20m^{2}+2m+20n^{2}+16n},\quad S_{8}=\sum_{m,n=-\infty}^{\infty}q^{20m^{2}+18m+20n^{2}+16n}.
\end{align*}
Then
\begin{align*}
H_{5,3}\left(S_{1}-q^{4}S_{2}\right) &=H_{5,3}\left(q^{2}S_{3}-q^{6}S_{4}\right)=H_{5,3}\left(qS_{5}-q^{5}S_{6}\right)=H_{5,3}\left(q^{4}S_{7}-q^{8}S_{8}\right)=0.
\end{align*}
\end{lemma}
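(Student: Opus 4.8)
The plan is to reduce all four assertions to a single mechanism by completing the square. Writing each one-dimensional factor as $\sum_k q^{Ak^2+Bk}=f(q^{A+B},q^{A-B})$ and then setting $u=20m+1$ or $u=20m+9$ for the $m$-sum and $v=20n+c$ for the $n$-sum, I would show that within each pair the two series carry the \emph{same} exponent function. For example $S_1$ rewrites, with $u=20m+1,\ v=20n+3$, as $\sum q^{(u^2+v^2-10)/20}$ over $u\equiv1,\ v\equiv3\pmod{20}$, while $q^4S_2$ rewrites, with $u=20m+9,\ v=20n+3$, as the same $\sum q^{(u^2+v^2-10)/20}$ over $u\equiv9,\ v\equiv3\pmod{20}$; the other three pairs behave identically, with common exponent $(u^2+v^2-10)/20$ (for $q^2S_3,q^6S_4$) or $(u^2+v^2+15)/20$ (for $qS_5,q^5S_6$ and $q^4S_7,q^8S_8$), with $v$ in a fixed class modulo $20$ and the two members differing only in the class of $u$ ($1$ versus $9$) modulo $20$. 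Applying $H_{5,3}$ then restricts to the congruence $u^2+v^2\equiv70$ (respectively $45$) $\pmod{100}$. So each of the four statements becomes the claim that two theta series, indexed by lattice points on a common family of circles $u^2+v^2=N$ and differing only by a congruence on $u$, are equal.

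To prove that equality I would produce an explicit exponent-preserving bijection between the two index sets, arising from the factorization $5=(2+\mathrm i)(2-\mathrm i)$ in $\mathbb Z[\mathrm i]$. Multiplication by the modulus-one Gaussian number $(-3+4\mathrm i)/5=-(2-\mathrm i)^2/5$ is the rotation $(u,v)\mapsto\frac15(-3u-4v,\,4u-3v)$, which preserves $u^2+v^2$; written in the summation variables of the pair $(S_1,q^4S_2)$ it becomes
\[
m'=\frac{-3m-4n-3}{5},\qquad n'=\frac{4m-3n-1}{5}.
\]
I would verify that $m',n'$ are integers exactly when $2m+n\equiv3\pmod5$ --- the congruence cut out by $H_{5,3}$ --- that the rotation by $(-3-4\mathrm i)/5$ supplies a two-sided inverse, and that the map therefore carries the index set of $H_{5,3}(S_1)$ bijectively onto that of $H_{5,3}(q^4S_2)$. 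Since the rotation preserves $u^2+v^2$ and the two exponents share the same additive constant, each monomial is sent to one with the same power of $q$; hence $H_{5,3}(S_1)=H_{5,3}(q^4S_2)$ and their difference vanishes. The remaining three pairs follow the same template, using the appropriate unit multiple of $(3\pm4\mathrm i)/5$.

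The main obstacle is to arrange the match modulo $20$ rather than only modulo $5$. The substitution is integer-valued precisely on the residue class that $H_{5,3}$ selects, and the very same congruence is what forces the image into the partner's class: the decisive elementary computation is $4(2m+n)\equiv12\pmod{20}$ (and its analogues for the other pairs), which is exactly where the mod-$5$ hypothesis upgrades to the mod-$20$ conclusion $u'\equiv9\pmod{20}$. The one genuinely case-dependent point is choosing the correct Gaussian unit, i.e. the correct sign in $\pm(3\pm4\mathrm i)/5$, so that both the residue of $u'$ and that of $v'$ modulo $20$ come out correctly (this involves a small mod-$4$ bookkeeping). Once that unit is fixed for each of the four pairs, exponent preservation is automatic from the identity $(-3u-4v)^2+(4u-3v)^2=25(u^2+v^2)$, and all the remaining steps are routine congruence verifications.
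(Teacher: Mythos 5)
Your proposal is correct, and it is essentially the paper's own argument in different packaging: the paper applies Hirschhorn's operation, solving $2m+n=5r-2$, $m-2n=-5s-1$ (resp.\ $2m-n=5r+1$, $m+2n=-5s-2$) to evaluate both $H_{5,3}(S_{1})$ and $H_{5,3}\bigl(q^{4}S_{2}\bigr)$ as the same explicit series $q^{18}\sum_{r,s}q^{100r^{2}+100s^{2}+70r+50s}$, and your rotation $(m,n)\mapsto\bigl(\frac{-3m-4n-3}{5},\frac{4m-3n-1}{5}\bigr)$ is exactly the composition of the paper's first substitution with the inverse of its second (including the sign flip $r\to-r$ the paper performs at the end). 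The only real difference is that you match terms by a single exponent-preserving bijection without ever computing the common value, whereas the paper computes both restricted sums explicitly and observes they coincide.
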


\begin{proof}
We only prove $H_{5,3}\left(S_{1}-q^{4}S_{2}\right)=0$, and the remaining cases are similar.

In $S_{1}$, if $2m+6n\equiv3\pmod{5}$, then $2m+n\equiv-2\pmod{5}$. Of course, $m-2n\equiv-1\pmod{5}$. Assume $2m+n=5r-2$ and $m-2n=-5s-1$, it follows that $m=2r-s-1$ and $n=r+2s$. Thus $H_{5,3}(S_{1})$ equals
\begin{align*}
 &\sum_{r,s=-\infty}^{\infty}q^{20(2r-s-1)^{2}+2(2r-s-1)+20(r+2s)^{2}+6(r+2s)}\\
 &=q^{18}\sum_{r,s=-\infty}^{\infty}q^{100r^{2}+100s^{2}-70r+50s}=q^{18}\sum_{r,s=-\infty}^{\infty}q^{100r^{2}+100s^{2}+70r+50s}.
\end{align*}

In $S_{2}$, $18m+6n\equiv-1$, $-2m+n\equiv-1$, that is, $2m-n\equiv1$, $m+2n\equiv-2$. Let $2m-n=5r+1$ and $m+2n=-5s-2$, then $m=2r-s$, $n=-r-2s-1$. Therefore, $H_{5,3}\left(q^{4}S_{2}\right)$ is
\begin{align*}
 &q^{4}\sum_{r,s=-\infty}^{\infty}q^{20(2r-s)^{2}+18(2r-s)+20(-r-2s-1)^{2}+6(-r-2s-1)}\\
 &=q^{18}\sum_{r,s=-\infty}^{\infty}q^{100r^{2}+100s^{2}+70r+50s},
\end{align*}
as desired.
\end{proof}

The above $q$-series manipulation was developed by Hirschhorn \cite{Hir2018}, therefore we called it \emph{Hirschhorn's operation} in the sequel.

We have
\begin{align*}
(-q,-q^{4};q^{5})_{\infty}^{2}=\dfrac{f(q,q^{4})^{2}}{E_{5}^{2}} &=\dfrac{1}{E_{5}^{2}}\sum_{m,n=-\infty}^{\infty}q^{(5m^{2}+3m+5n^{2}+3n)/2}\\
 &=\dfrac{1}{E_{5}^{2}}\Bigg(\sum_{r,s=-\infty}^{\infty}q^{(5(r+s)^{2}+3(r+s)+5(r-s)^{2}+3(r-s))/2}\\
 &\quad+\sum_{r,s=-\infty}^{\infty}q^{(5(r+s-1)^{2}+3(r+s-1)+5(r-s)^{2}+3(r-s))/2}\Bigg)\\
 &=\dfrac{E_{10}^{5}}{E_{5}^{4}E_{20}^{2}}\sum_{n=-\infty}^{\infty}q^{5n^{2}+3n}+2q\dfrac{E_{20}^{2}}{E_{5}^{2}E_{10}}
 \sum_{n=-\infty}^{\infty}q^{5n^{2}+2n}\\
 &=\dfrac{E_{10}^{5}}{E_{5}^{4}E_{20}^{2}}\Bigg(\sum_{n=-\infty}^{\infty}q^{20n^{2}+6n}+q^{2}\sum_{n=-\infty}^{\infty}q^{20n^{2}+14n}\Bigg)\\
 &\quad+2\dfrac{E_{20}^{2}}{E_{5}^{2}E_{10}}\Bigg(q\sum_{n=-\infty}^{\infty}q^{20n^{2}+4n}+q^{4}\sum_{n=-\infty}^{\infty}q^{20n^{2}+16n}\Bigg).
\end{align*}

Moreover,
\begin{align*}
(q^{4},q^{6};q^{10})_{\infty} &=\dfrac{1}{E_{10}}\sum_{m=-\infty}^{\infty}(-1)^{m}q^{5m^{2}+m}\\
 &=\dfrac{1}{E_{10}}\Bigg(\sum_{m=-\infty}^{\infty}q^{20m^{2}+2m}-q^{4}\sum_{m=-\infty}^{\infty}q^{20m^{2}+18m}\Bigg).
\end{align*}

We then get
\begin{align*}
\sum_{n=0}^{\infty}a_{1}(n)q^{n} &=\dfrac{1}{E_{10}}\Bigg(\sum_{m=-\infty}^{\infty}q^{20m^{2}+2m}-q^{4}\sum_{m=-\infty}^{\infty}q^{20m^{2}+18m}\Bigg)\\
 &\quad\times\Bigg(\dfrac{E_{10}^{5}}{E_{5}^{4}E_{20}^{2}}\sum_{n=-\infty}^{\infty}q^{20n^{2}+6n}+q^{2}\dfrac{E_{10}^{5}}{E_{5}^{4}E_{20}^{2}}
 \sum_{n=-\infty}^{\infty}q^{20n^{2}+14n}\\
 &\quad+2q\dfrac{E_{20}^{2}}{E_{5}^{2}E_{10}}\sum_{n=-\infty}^{\infty}q^{20n^{2}+4n}+2q^{4}\dfrac{E_{20}^{2}}{E_{5}^{2}E_{10}}
 \sum_{n=-\infty}^{\infty}q^{20n^{2}+16n}\Bigg)\\
 &=\dfrac{E_{10}^{4}}{E_{5}^{4}E_{20}^{2}}\big(S_{1}-q^{4}S_{2}+q^{2}S_{3}-q^{6}S_{4}\big)
 +\dfrac{2E_{20}^{2}}{E_{5}^{2}E_{10}^{2}}\big(qS_{5}-q^{5}S_{6}+q^{4}S_{7}-q^{8}S_{8}\big).
\end{align*}

In view of Lemma \ref{lemma1}, we obtain \eqref{a1}.

The proof of \eqref{b1} is similar so is omitted here.

Now, we are ready to prove \eqref{a2}--\eqref{b3}.

We start with
\begin{align*}
\sum_{n=0}^{\infty}a_{2}(n)q^{n} &=\dfrac{f(q,q^{4})}{E_{5}E_{10}}\Bigg(\sum_{m=-\infty}^{\infty}q^{20m^{2}+6m}-q^{2}\sum_{m=-\infty}^{\infty}q^{20m^{2}+14m}\Bigg)\\
 &\quad\times\Bigg(\dfrac{E_{10}^{5}}{E_{5}^{4}E_{20}^{2}}\sum_{n=-\infty}^{\infty}q^{20n^{2}+6n}+q^{2}\dfrac{E_{10}^{5}}{E_{5}^{4}E_{20}^{2}}
 \sum_{n=-\infty}^{\infty}q^{20n^{2}+14n}\\
 &\quad+2q\dfrac{E_{20}^{2}}{E_{5}^{2}E_{10}}\sum_{n=-\infty}^{\infty}q^{20n^{2}+4n}+2q^{4}\dfrac{E_{20}^{2}}{E_{5}^{2}E_{10}}
 \sum_{n=-\infty}^{\infty}q^{20n^{2}+16n}\Bigg)\\
 &=\dfrac{E_{10}^{4}}{E_{5}^{5}E_{20}^{2}}f(q,q^{4})\Bigg(\sum_{r,s=-\infty}^{\infty}q^{20(r+s)^{2}+6(r+s)+20(r-s)^{2}+6(r-s)}\\
 &\quad+\sum_{r,s=-\infty}^{\infty}q^{20(r+s-1)^{2}+6(r+s-1)+20(r-s)^{2}+6(r-s)}\\
 &\quad-q^{4}\sum_{r,s=-\infty}^{\infty}q^{20(r+s)^{2}+14(r+s)+20(r-s)^{2}+14(r-s)}\\
 &\quad-q^{4}\sum_{r,s=-\infty}^{\infty}q^{20(r+s-1)^{2}+14(r+s-1)+20(r-s)^{2}+14(r-s)}\Bigg)\\
 &\quad+\dfrac{2E_{20}^{2}}{E_{5}^{3}E_{10}^{2}}f(q,q^{4})\Bigg(q\sum_{r,s=-\infty}^{\infty}q^{20(r+s)^{2}+6(r+s)+20(r-s)^{2}+4(r-s)}\\
 &\quad+q\sum_{r,s=-\infty}^{\infty}q^{20(r+s-1)^{2}+6(r+s-1)+20(r-s)^{2}+4(r-s)}\\
 &\quad+q^{4}\sum_{r,s=-\infty}^{\infty}q^{20(r+s)^{2}+6(r+s)+20(r-s)^{2}+16(r-s)}\\
 &\quad+q^{4}\sum_{r,s=-\infty}^{\infty}q^{20(r+s-1)^{2}+6(r+s-1)+20(r-s)^{2}+16(r-s)}\\
 &\quad-q^{3}\sum_{r,s=-\infty}^{\infty}q^{20(r+s)^{2}+14(r+s)+20(r-s)^{2}+4(r-s)}\\
 &\quad-q^{3}\sum_{r,s=-\infty}^{\infty}q^{20(r+s-1)^{2}+14(r+s-1)+20(r-s)^{2}+4(r-s)}\\
 &\quad-q^{6}\sum_{r,s=-\infty}^{\infty}q^{20(r+s)^{2}+14(r+s)+20(r-s)^{2}+16(r-s)}\\
 &\quad-q^{6}\sum_{r,s=-\infty}^{\infty}q^{20(r+s-1)^{2}+14(r+s-1)+20(r-s)^{2}+16(r-s)}\Bigg)\\
 &=\dfrac{E_{10}^{4}E_{80}^{5}f(q,q^{4})}{E_{5}^{5}E_{20}^{2}E_{40}^{2}E_{160}^{2}}\Bigg(\sum_{n=-\infty}^{\infty}q^{40n^{2}+12n}
 -q^{4}\sum_{n=-\infty}^{\infty}q^{40n^{2}+28n}\Bigg)\\
 &\quad+\dfrac{2E_{10}^{4}E_{160}^{2}f(q,q^{4})}{E_{5}^{5}E_{20}^{2}E_{80}}\Bigg(q^{14}\sum_{n=-\infty}^{\infty}q^{40n^{2}+28n}-q^{10}
 \sum_{n=-\infty}^{\infty}q^{40n^{2}+12n}\Bigg)\\
 &\quad+\dfrac{2E_{20}^{2}f(q^{30},q^{50})f(q,q^{4})}{E_{5}^{3}E_{10}^{2}}\Bigg(q\sum_{n=-\infty}^{\infty}q^{40n^{2}+2n}
 -q^{10}\sum_{n=-\infty}^{\infty}q^{40n^{2}+38n}\\
 &\quad+q^{4}\sum_{n=-\infty}^{\infty}q^{40n^{2}+22n}-q^{3}\sum_{n=-\infty}^{\infty}q^{40n^{2}+18n}\Bigg)\\
 &\quad+\dfrac{2E_{20}^{2}f(q^{10},q^{70})f(q,q^{4})}{E_{5}^{3}E_{10}^{2}}
 \Bigg(q^{15}\sum_{n=-\infty}^{\infty}q^{40n^{2}+38n}-q^{6}\sum_{n=-\infty}^{\infty}q^{40n^{2}+2n}\\
 &\quad+q^{8}\sum_{n=-\infty}^{\infty}q^{40n^{2}+18n}-q^{9}\sum_{n=-\infty}^{\infty}q^{40n^{2}+22n}\Bigg).
\end{align*}

Denote
\begin{align*}
A_{1} &:=f(q,q^{4})\sum_{n=-\infty}^{\infty}q^{40n^{2}+12n}-q^{4}f(q,q^{4})\sum_{n=-\infty}^{\infty}q^{40n^{2}+28n},\\
A_{2} &:=q^{14}f(q,q^{4})\sum_{n=-\infty}^{\infty}q^{40n^{2}+28n}-q^{10}f(q,q^{4})\sum_{n=-\infty}^{\infty}q^{40n^{2}+12n},\\
A_{3} &:=qf(q,q^{4})\sum_{n=-\infty}^{\infty}q^{40n^{2}+2n}-q^{10}f(q,q^{4})\sum_{n=-\infty}^{\infty}q^{40n^{2}+38n},\\
A_{4} &:=q^{4}f(q,q^{4})\sum_{n=-\infty}^{\infty}q^{40n^{2}+22n}-q^{3}f(q,q^{4})\sum_{n=-\infty}^{\infty}q^{40n^{2}+18n},\\
A_{5} &:=q^{15}f(q,q^{4})\sum_{n=-\infty}^{\infty}q^{40n^{2}+38n}-q^{6}f(q,q^{4})\sum_{n=-\infty}^{\infty}q^{40n^{2}+2n},\\
A_{6} &:=q^{8}f(q,q^{4})\sum_{n=-\infty}^{\infty}q^{40n^{2}+18n}-q^{9}f(q,q^{4})\sum_{n=-\infty}^{\infty}q^{40n^{2}+22n}.
\end{align*}

Next, we prove that
\begin{align*}
H_{5,4}(A_{i})=0,\quad \textrm{for}\quad1\leq i\leq6.
\end{align*}
We only prove the case $A_{1}$ here because the proofs of remaining cases are similar.

Also,
\begin{align*}
f(q,q^{4}) &=\sum_{m=-\infty}^{\infty}q^{(5m^{2}+3m)/2}\\
 &=\sum_{m=-\infty}^{\infty}q^{10m^{2}+3m}+q\sum_{m=-\infty}^{\infty}q^{10m^{2}+7m}\\
 &=\sum_{m=-\infty}^{\infty}q^{40m^{2}+6m}+q^{7}\sum_{m=-\infty}^{\infty}q^{40m^{2}+34m}
 +q\sum_{m=-\infty}^{\infty}q^{40m^{2}+14m}+q^{4}\sum_{m=-\infty}^{\infty}q^{40m^{2}+26m}.
\end{align*}
Therefore,
\begin{align*}
A_{1} &=P_{1}-P_{2}+P_{3}-P_{4}+P_{5}-P_{6}+P_{7}-P_{8},
\end{align*}
where
\begin{align*}
P_{1} &=\sum_{m,n=-\infty}^{\infty}q^{40m^{2}+6m+40n^{2}+12n},\quad P_{2}=q^{8}\sum_{m,n=-\infty}^{\infty}q^{40m^{2}+26m+40n^{2}+28n},\\
P_{3} &=q\sum_{m,n=-\infty}^{\infty}q^{40m^{2}+14m+40n^{2}+12n},\quad P_{4}=q^{11}\sum_{m,n=-\infty}^{\infty}q^{40m^{2}+34m+40n^{2}+28n},\\
P_{5} &=q^{4}\sum_{m,n=-\infty}^{\infty}q^{40m^{2}+26m+40n^{2}+12n},\quad P_{6}=q^{4}\sum_{m,n=-\infty}^{\infty}q^{40m^{2}+6m+40n^{2}+28n},\\
P_{7} &=q^{7}\sum_{m,n=-\infty}^{\infty}q^{40m^{2}+34m+40n^{2}+12n},\quad P_{8}=q^{5}\sum_{m,n=-\infty}^{\infty}q^{40m^{2}+14m+40n^{2}+28n}.
\end{align*}

By Hirschhorn's operation, we have
\begin{align*}
H_{5,4}\left(P_{2i-1}-P_{2i}\right)=0,\quad \textrm{for}\quad1\leq i\leq4.
\end{align*}
This proves \eqref{a2}.

Similarly, we get
\begin{align*}
\sum_{n=0}^{\infty}a_{3}(n)q^{n} &=\dfrac{f(q,q^{4})}{E_{5}^{3}E_{10}}\Bigg(\sum_{m=-\infty}^{\infty}q^{20m^{2}+4m}-q^{3}\sum_{m=-\infty}^{\infty}q^{20m^{2}+16m}\Bigg)\\
 &\quad\times\Bigg(\dfrac{E_{10}^{5}}{E_{5}^{2}E_{20}^{2}}\sum_{n=-\infty}^{\infty}q^{20n^{2}+6n}+q^{2}\dfrac{E_{10}^{5}}{E_{5}^{2}E_{20}^{2}}
 \sum_{n=-\infty}^{\infty}q^{20n^{2}+14n}\\
 &\quad+2q\dfrac{E_{20}^{2}}{E_{10}}\sum_{n=-\infty}^{\infty}q^{20n^{2}+4n}+2q^{4}\dfrac{E_{20}^{2}}{E_{10}}\sum_{n=-\infty}^{\infty}q^{20n^{2}+16n}\Bigg)\\
 &=\dfrac{E_{10}^{4}f(q^{30},q^{50})f(q,q^{4})}{E_{5}^{5}E_{20}^{2}}
 \Bigg(\sum_{n=-\infty}^{\infty}q^{40n^{2}+2n}+q^{2}\sum_{n=-\infty}^{\infty}q^{40n^{2}+18n}\\
 &\quad-q^{3}\sum_{n=\infty}^{\infty}q^{40n^{2}+22n}-q^{9}\sum_{n=\infty}^{\infty}q^{40n^{2}+38n}\Bigg)\\
 &\quad+\dfrac{E_{10}^{4}f(q^{10},q^{70})f(q,q^{4})}{E_{5}^{5}E_{20}^{2}}
 \Bigg(q^{8}\sum_{n=-\infty}^{\infty}q^{40n^{2}+22n}+q^{14}\sum_{n=-\infty}^{\infty}q^{40n^{2}+38n}\\
 &\quad-q^{7}\sum_{n=\infty}^{\infty}q^{40n^{2}+18n}-q^{5}\sum_{n=\infty}^{\infty}q^{40n^{2}+2n}\Bigg)\\
 &\quad+\dfrac{2E_{20}^{2}E_{80}^{5}f(q,q^{4})}{E_{5}^{3}E_{10}^{2}E_{40}^{2}E_{160}^{2}}
 \Bigg(q\sum_{n=-\infty}^{\infty}q^{40n^{2}+8n}-q^{7}\sum_{n=-\infty}^{\infty}q^{40n^{2}+32n}\Bigg)\\
 &\quad+\dfrac{4E_{20}^{2}E_{160}^{2}f(q,q^{4})}{E_{5}^{3}E_{10}^{2}E_{80}}
 \Bigg(q^{17}\sum_{n=-\infty}^{\infty}q^{40n^{2}+32n}-q^{11}\sum_{n=-\infty}^{\infty}q^{40n^{2}+8n}\Bigg).
\end{align*}

Now, we discuss the following two cases:
\begin{enumerate}[1)]
\item Denote
\begin{align*}
B_{1} &:=f(q,q^{4})\sum_{m=-\infty}^{\infty}q^{20n^{2}+2n}-q^{9}f(q,q^{4})\sum_{n=\infty}^{\infty}q^{40n^{2}+38n},\\
B_{2} &:=q^{2}f(q,q^{4})\sum_{n=-\infty}^{\infty}q^{40n^{2}+18n}-q^{3}f(q,q^{4})\sum_{n=\infty}^{\infty}q^{40n^{2}+22n},\\
B_{3} &:=q^{8}f(q,q^{4})\sum_{n=-\infty}^{\infty}q^{40n^{2}+22n}-q^{7}f(q,q^{4})\sum_{n=\infty}^{\infty}q^{40n^{2}+18n},\\
B_{4} &:=q^{14}f(q,q^{4})\sum_{n=-\infty}^{\infty}q^{40n^{2}+38n}-q^{5}f(q,q^{4})\sum_{n=\infty}^{\infty}q^{40n^{2}+2n},\\
B_{5} &:=qf(q,q^{4})\sum_{n=-\infty}^{\infty}q^{40n^{2}+8n}-q^{7}f(q,q^{4})\sum_{n=-\infty}^{\infty}q^{40n^{2}+32n},\\
B_{6} &:=q^{17}f(q,q^{4})\sum_{n=-\infty}^{\infty}q^{40n^{2}+32n}-q^{11}f(q,q^{4})\sum_{n=-\infty}^{\infty}q^{40n^{2}+8n}.
\end{align*}

By reasoning as above, we obtain
\begin{align*}
H_{5,3}(B_{i})=0,\quad \textrm{for}\quad1\leq i\leq6.
\end{align*}

Therefore,
\begin{align*}
a_{3}(5n+3)=0.
\end{align*}

\item Denote
\begin{align*}
C_{1} &:=f(q,q^{4})\sum_{m=-\infty}^{\infty}q^{20n^{2}+2n}-q^{3}f(q,q^{4})\sum_{n=\infty}^{\infty}q^{40n^{2}+22n},\\
C_{2} &:=q^{2}f(q,q^{4})\sum_{n=-\infty}^{\infty}q^{40n^{2}+18n}-q^{9}f(q,q^{4})\sum_{n=\infty}^{\infty}q^{40n^{2}+38n},\\
C_{3} &:=q^{8}f(q,q^{4})\sum_{n=-\infty}^{\infty}q^{40n^{2}+22n}-q^{5}f(q,q^{4})\sum_{n=\infty}^{\infty}q^{40n^{2}+2n},\\
C_{4} &:=q^{14}f(q,q^{4})\sum_{n=-\infty}^{\infty}q^{40n^{2}+38n}-q^{7}f(q,q^{4})\sum_{n=\infty}^{\infty}q^{40n^{2}+18n},\\
C_{5} &:=qf(q,q^{4})\sum_{n=-\infty}^{\infty}q^{40n^{2}+8n}-q^{7}f(q,q^{4})\sum_{n=-\infty}^{\infty}q^{40n^{2}+32n},\\
C_{6} &:=q^{17}f(q,q^{4})\sum_{n=-\infty}^{\infty}q^{40n^{2}+32n}-q^{11}f(q,q^{4})\sum_{n=-\infty}^{\infty}q^{40n^{2}+8n}.
\end{align*}

Thus, as above,
\begin{align*}
H_{5,4}(C_{i})=0,\quad \textrm{for}\quad1\leq i\leq6.
\end{align*}

Furthermore,
\begin{align*}
a_{3}(5n+4)=0.
\end{align*}
\end{enumerate}
This establishes \eqref{a3}.

The proofs of \eqref{b2} and \eqref{b3} are similar to those of \eqref{a2} and \eqref{a3}.

\section{Final remarks}
On one hand, there are more analogous results on vanishing coefficients in other infinite product expansions beyond this paper. Define
\begin{align}
(-q^{r},-q^{t-r};q^{t})_{\infty}^{3}(q^{s},q^{2t-s};q^{2t})_{\infty} &:=\sum_{n=0}^{\infty}a_{r,s,t}(n)q^{n},\label{a,r-s-t}\\
(-q^{r},-q^{t-r};q^{t})_{\infty}(q^{s},q^{2t-s};q^{2t})_{\infty}^{3} &:=\sum_{n=0}^{\infty}b_{r,s,t}(n)q^{n}\label{b,r-s-t}
\end{align}
where $t\geq5$ is a prime, $r ,s$ are positive integers and $r<t$, $s\neq t$.

Following the same line of proving \eqref{a2}--\eqref{b3}, we can also obtain
\begin{align}
b_{1,4,5}(5n+3) &=b_{2,2,5}(5n+4)=0,\label{a14-22}\\
a_{1,1,7}(7n+2) &=a_{1,1,7}(7n+5)=0,\\
b_{1,5,7}(7n+3) &=b_{1,5,7}(7n+4)=0,\\
a_{3,3,7}(7n+5) &=a_{3,3,7}(7n+6)=0,\\
b_{3,1,7}(7n+3) &=b_{3,1,7}(7n+5)=0,\\
a_{4,6,11}(11n+3) &=b_{4,2,11}(11n+3)=0,\\
a_{5,2,11}(11n+1) &=b_{5,8,11}(11n+4)=0.\label{a52-58}
\end{align}

There are other equalities similar to \eqref{a14-22}--\eqref{a52-58} for $t=7$ or 11. However, there are no similar results for $t=13$ or 17. It is natural to ask whether or not there exists a criterion which can be used for searching for vanishing coefficients of the arithmetic progressions in $a_{r,s,t}(n)$ and $b_{r,s,t}(n)$.

On the other hand, the product representation of the Rogers-Ramanujan fractions is given by \cite[Eq. (16.2.1)]{Hirb2017}:
\begin{align}
R(q) =\dfrac{(q,q^{4};q^{5})_{\infty}}{(q^{2},q^{3};q^{5})_{\infty}}=\sum_{n=0}^{\infty}u(n)q^{n}.\label{RR-fraction}
\end{align}

Richmond and Szekeres \cite{RS1978} also examined asymptotically the power series coefficients of a large class of infinite products including \eqref{RR-fraction}. They proved that, for sufficiently large $n$,
\begin{align}\label{sign pattern}
u(5n), u(5n+2)>0,\quad \textrm{and} \quad u(5n+1), u(5n+3), u(5n+4)<0.
\end{align}
A similar result was also obtained for the coefficients of $1/R(q)$.

In 1981, Andrews \cite{And1981} used partition-theoretic interpretations of these coefficients, to prove \eqref{sign pattern} holds for all $n\geq0$, except that $u(3)=u(8)=u(13)=u(23)=0$. Hirschhorn \cite{Hir1998} later provided an elementary proof of \eqref{sign pattern} using only the quintuple product identity.

With the aid of computer, the signs of coefficients in $q$-series \eqref{a,r-s-t} and \eqref{b,r-s-t} appear to be periodic from some large $n$ for $t=5, 7$, and 11. For example,
\begin{align*}
b_{1}(5n), b_{1}(5n+2), b_{1}(5n+3)>0, \quad&\textrm{and}\quad b_{1}(5n+4)<0,\\
a(10n), a(10n+3), a(10+6)>0, \quad&\textrm{and}\quad a(10n+1), a(10n+5), a(10n+8)<0,\\
a_{1,1,7}(7n+1), a_{1,1,7}(7n+6)>0, \quad&\textrm{and}\quad a_{1,1,7}(7n+3), a_{1,1,7}(7n+4)<0,\\
b_{3,3,11}(22n+2)>0, \quad&\textrm{and}\quad b_{3,3,11}(22n+13)<0.
\end{align*}
However, it is unclear how these inequalities could be proved by $q$-series.

Furthermore, the numerical evidence suggests the following conjecture.
\begin{conjecture}
For given $r, s$, and $t$, the signs of $a_{r,s,t}(n)$'s and $b_{r,s,t}(n)$'s are periodic with period $t$ or $2t$ from sufficiently large $n$.
\end{conjecture}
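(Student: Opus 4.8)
The plan is to regard the conjecture as an asymptotic statement and to prove it by combining the exact dissections of Section~2 with the circle method, in the spirit of Richmond and Szekeres~\cite{RS1978}. Let $M\in\{t,2t\}$ denote the conjectured period and let $F(q)=\sum_{n\ge0}f(n)q^{n}$ be either of the products in \eqref{a,r-s-t} or \eqref{b,r-s-t}. First I would apply Jacobi's triple product identity to each factor $(-q^{r},-q^{t-r};q^{t})_{\infty}$ and $(q^{s},q^{2t-s};q^{2t})_{\infty}$ so as to present $F(q)$ as an eta quotient multiplied by a theta factor, exactly as was done for $a_{1},\dots,b_{3}$. I would then carry out the full $M$-dissection $F(q)=\sum_{\ell=0}^{M-1}q^{\ell}F_{\ell}(q^{M})$ with $F_{\ell}(q)=\sum_{n\ge0}f(Mn+\ell)q^{n}$, discard by the Hirschhorn operation the classes already known to vanish, and reduce the conjecture to the statement that for each surviving $\ell$ the sign of $f(Mn+\ell)$ is eventually constant in $n$.

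For each surviving class I would locate the singularities of $F$ on the unit circle; these lie among the $M$-th roots of unity, and the Hardy-Ramanujan-Rademacher circle method describes the coefficients in terms of them. Each root of unity $\zeta$ of the dominant modulus contributes a term of size $\operatorname{Re}\!\bigl(\gamma_{\zeta}\,\zeta^{-\ell}\bigr)\,n^{\alpha}e^{\beta\sqrt{n}}$ to $f(Mn+\ell)$, where the exponential rate $\beta$ and the polynomial order $\alpha$ are shared by all classes. Summing over the finitely many dominant $\zeta$ gives $f(Mn+\ell)\sim D_{\ell}\,n^{\alpha}e^{\beta\sqrt{n}}$, where $D_{\ell}$ depends on $\ell$ only through the phases $\zeta^{-\ell}$. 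Hence the eventual sign of $f(Mn+\ell)$ equals $\operatorname{sgn}(D_{\ell})$, a quantity independent of $n$, which is precisely eventual periodicity with period dividing $M$. Whether the period is $t$ or $2t$ is then dictated by whether the dominant singularities are $t$-th or $2t$-th roots of unity.

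The hard part will be to show that $D_{\ell}\ne0$ for every surviving class and to determine its sign. For the $a$-type products the singularity at $q=1$ is dominant and positive, so at leading order every class would be positive; the negative classes seen numerically must therefore arise from secondary arcs at roots of unity $\zeta\ne1$, and one has to prove that some such arc is genuinely dominant and contributes a cosine $\operatorname{Re}(\gamma_{\zeta}\zeta^{-\ell})$ that flips the sign in exactly the predicted classes. For the $b$-type products the cube $(q^{s},q^{2t-s};q^{2t})_{\infty}^{3}$ suppresses the contribution at $q=1$ altogether, so the dominant behavior comes from $\zeta\ne1$ from the outset, and identifying which roots of unity dominate is itself the central difficulty. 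In both cases the delicate point is to control the secondary terms uniformly in $n$, so that $D_{\ell}=0$ is excluded and only finitely many small coefficients can violate the eventual pattern, exactly as with the exceptions $u(3)=u(8)=u(13)=u(23)=0$ for the Rogers-Ramanujan fraction.

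A second, more elementary route, closer to the methods of this paper, would follow Hirschhorn~\cite{Hir1998} and Andrews~\cite{And1981}: after the $M$-dissection one would interpret each surviving $F_{\ell}$ combinatorially and build sign-preserving injections showing that its coefficients are ultimately of one sign. This avoids the circle method entirely, but it demands a partition model uniform in $(r,s,t)$ together with the monotonicity estimates needed to absorb the finitely many small exceptions, and producing such a model seems at least as hard as the analytic approach above.
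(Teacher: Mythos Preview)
The paper does not prove this statement: it is stated explicitly as a conjecture, motivated only by numerical evidence, and no argument is offered. There is therefore no proof in the paper to compare your proposal against.

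Your proposal is a research plan rather than a proof. You correctly identify the crux---showing that the leading constant $D_{\ell}$ is nonzero for every surviving residue class and computing its sign---and then leave it unresolved. There is also a genuine conceptual slip in your treatment of the $a$-type products. You write that the arc at $q=1$ is dominant and positive, so at leading order every class would be positive, and that the negative classes observed numerically ``must therefore arise from secondary arcs''. But if the $q=1$ arc is truly dominant in the asymptotics of $f(Mn+\ell)$, then by definition no secondary arc can reverse the sign for large $n$: your own formula gives $D_{\ell}>0$ for every $\ell$, contradicting the numerics. What must actually happen for the negative classes is that the contribution of the $q=1$ arc to the dissected piece $F_{\ell}$ cancels (or is subleading relative to another root of unity for that particular $\ell$), so that a different singularity supplies the genuine main term there. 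Establishing this cancellation, uniformly in $(r,s,t)$, is precisely the heart of the problem, and nothing in your outline addresses it. The alternative combinatorial route you mention at the end is equally schematic. In short, both approaches are plausible strategies toward the conjecture, but neither, as written, constitutes a proof.
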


\section*{Acknowledgement}
The author is indebted to Shishuo Fu for his helpful comments on a preliminary version of this paper. The author would like to acknowledge the referee for his/her careful reading and helpful comments on an earlier version of the paper. This work was supported by the National Natural Science Foundation of China (No.~11501061) and the Fundamental Research Funds for the Central Universities (No. 2018CDXYST0024).

\end{document}